\providecommand{\tabularnewline}{\\}
\numberwithin{equation}{section}
\numberwithin{figure}{section}
  \theoremstyle{plain}
  \newtheorem*{thm*}{\protect\theoremname}
\theoremstyle{plain}
\newtheorem{thm}{\protect\theoremname}
  \theoremstyle{plain}
  \newtheorem{prop}[thm]{\protect\propositionname}
  \theoremstyle{plain}
  \newtheorem{lem}[thm]{\protect\lemmaname}
  \theoremstyle{plain}
  \newtheorem*{conjecture*}{\protect\conjecturename}
  \providecommand{\conjecturename}{Conjecture}
  \providecommand{\lemmaname}{Lemma}
  \providecommand{\propositionname}{Proposition}
  \providecommand{\theoremname}{Theorem}
\providecommand{\theoremname}{Theorem}
\begin{document}

\title{The Parity of Analytic Ranks among Quadratic Twists of Elliptic Curves
over Number Fields}

\author{Nava Balsam}

\date{November 07, 2013}
\begin{abstract}
The parity of the analytic rank of an elliptic curve is given by the
root number in the functional equation $L(E,s).$ Fixing an elliptic
curve over any number field and considering the family of its quadratic
twists, it is natural to ask what the average analytic rank in this
family is. A lower bound on this number is given by the average root
number. In this paper, we investigate the root number in such families
and derive an asymptotic formula for the proportion of curves in the
family that have even rank. Our results are then used to support a
conjecture about the average analytic rank in this family of elliptic
curves.
\end{abstract}
\maketitle

\section{Introduction}

Associated to an elliptic curve $E$ over a field $K$ is a family
of curves, its quadratic twists. Within this family, it is natural
to consider the the distribution of the most intriguing invariant
of an elliptic curve, its Mordell-Weil rank, which, via the Birch
and Swinnerton-Dyer conjecture is equal to its analytic rank. This
investigation began when Goldfeld \cite{Go79} conjectured that the
average analytic rank in such a family associated to an elliptic curve
over $\mathbb{Q},$ is $\frac{1}{2}$. The motivation for this conjecture
comes from considering the root number in the $L-$function of a twisted
elliptic curve and the folklore conjecture that elliptic curves of
rank $\ge2$ are rare. The result of this paper is a generalization
of the conjecture to elliptic curves over arbitrary number fields.
In many cases we find that the twists of $E/K$ do not have even and
odd analytic rank in equal proportion and therefore the conjectured
average value of the analytic rank is not $\frac{1}{2}$.

Recently, such an analysis was undertaken by Klagsbrun, Mazur and
Rubin \cite{KMR}, wherein they find the density of curves of even
Selmer rank among a family of quadratic twists. Using their results,
they formulate a generalization of Goldfeld's conjecture for elliptic
curves over number fields. Some of their techniques lend themselves
to the analytic aspect as well and have been a major inspiration for
this paper. We are able to verify their results in many cases, equating
the average algebraic rank (mod 2) with the average analytic rank
(mod 2). Thus, the result of this paper can be viewed as a parity
conjecture \textit{on average} in this family of elliptic curves.
In other words, the proportion of elliptic curves with even algebraic
rank is equal to the proportion of curves that have even analytic
rank, in a family of quadratic twists. 

Because of the convenience of dealing with automorphic representations
of $GL(2)$, in this paper we only consider elliptic curves which
are modular, a property which is defined as follows. Let $E$ be an
elliptic curve with conductor $\mathfrak{N}$ over a number field
$K$. Then $E$ is modular if the Hasse-Weil $L-$function of $E/K$
is equal to the $L-$function of a cuspidal Hilbert modular form of
parallel weight $2$, and level $\mathfrak{N}$ (See\cite{FS13}).
In particular, the Hasse-Weil $L-$function of $E$, denoted $L(E/K,s)$
or simply $L(E,s)$ is equal to the $L-$function of a cuspidal automorphic
representation $\pi_{E}=\otimes\pi_{E,v}$ of $GL(2,\mathbb{A}_{K})$
associated to a Hilbert modular form, and therefore $L(E,s)$ satisfies
a functional equation. It is known that all elliptic curves over $K=\mathbb{Q}$
are modular \cite{Mod01} and recently it has been shown that most
elliptic curves over totally real fields and real quadratic extensions
of totally real fields, are modular \cite{Hung13,FHS}.

Let $\Lambda_{E}(s)$ denote the ``completed'' $L$-function of $\pi_{E}$
, that is, including the factors at the archimedian places. Then the
functional equation takes the form $\Lambda_{E}(s)=w\Lambda(1-s)$
where $w\in\{\pm1\}$, called the root number will play a pivotal
role in this paper. The \textbf{analytic rank }of $E$, denoted $rk(E)$
is defined as 
\[
rk(E)=\text{order of vanishing of }\Lambda_{E}(s)\text{ at }s=\frac{1}{2}.
\]

For an elliptic curve $E/K$, we study the \textbf{quadratic twists}
of $E$: these are elliptic curves $E'$ which are isomorphic to $E$
over some quadratic extension $K'/K$. In order to make assertions
about density in a family of curves, it is necessary to order them
in some way. Suppose that $E'$ is isomorphic to $E$ over some quadratic
field $K'$. Then via standard class field theory, there is a unique
quadratic Hecke character $\chi$, associated to the extension $K'/K$.
Let $E^{\chi}$ be the curve which is isomorphic to $E$ over the
quadratic field $K'$ with associated Hecke character $\chi$ and
let $C(K)$ be the set of all quadratic Hecke characters of $\mathbb{A}_{K}^{\times}$.
For $\chi\in C(K)$, let $q_{1},q_{2},...,q_{n}$ be the places where
$\chi$ is ramified. Then we define the norm, $N\chi={\displaystyle \max_{i}}\{Nq_{i}\}$
which gives an ordering of twists of an elliptic curve. Our main result
is the following,
\begin{thm*}
Let $E$ be a modular elliptic curve over a number field $K$ such
that no local supercuspidal representations occur in the factorization
of $\pi_{E}$, then 
\[
{\displaystyle \lim_{X\rightarrow\infty}}\frac{\#\{\chi\in C(K)\mid N\chi\le X\text{ and rk}(E^{\chi})\text{ is even}\}}{\#\{\chi\in C(K)\mid N\chi\le\text{\ensuremath{X}\} }}=\frac{1+(-1)^{rk(E)}\kappa}{2}
\]
where $\kappa=\prod\kappa_{v}$ is a product over the places of $K$
given by 

\[
\kappa_{v}=\begin{cases}
0 & \text{if \ensuremath{K_{v}\backsimeq\mathbb{R}}}\\
1 & \ensuremath{\text{if }K_{v}\backsimeq\mathbb{C}}\\
2/|c_{v}|-1 & E\text{ has split mutiplicative reduction at \ensuremath{v}}\\
1-2/|c_{v}| & E\text{ has nonsplit multiplicative reduction at \ensuremath{v}}\\
1-2/|c_{v}| & E\text{ has multiplicative reduction in a quadratic extension at \ensuremath{v}}\\
1 & E\text{ has potentially multiplicative reduction (non-quadratic)}\\
1 & \text{otherwise}
\end{cases}
\]
 and $|c_{v}|$ is the number of degree 2 extensions of $K_{v}$ (if
$v\mid2$ then $|c_{v}|=4\cdot2^{[K_{v}:\mathbb{Q}_{v}]})$ otherwise
$|c_{v}|=4)$ . 
\end{thm*}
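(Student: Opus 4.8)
The plan is to reduce the assertion to a statement about the average root number, then to separate that into a purely local computation of $\varepsilon$-factors and a global equidistribution count for quadratic Hecke characters. Since $\Lambda_{E^{\chi}}(s)=w(E^{\chi})\,\Lambda_{E^{\chi}}(1-s)$, the rank $rk(E^{\chi})$ is even exactly when the root number $w(E^{\chi})=+1$, so the ratio in the theorem equals $\tfrac12\bigl(1+\lim_{X\to\infty}\tfrac{1}{\#\{\chi:N\chi\le X\}}\sum_{N\chi\le X}w(E^{\chi})\bigr)$, and it suffices to show this limiting average is $(-1)^{rk(E)}\kappa$. As $E$ is modular, $\pi_{E^{\chi}}=\pi_{E}\otimes\chi$ is cuspidal automorphic and its global root number factors as $w(E^{\chi})=\prod_{v}w_{v}(\pi_{E,v}\otimes\chi_{v})$; writing $\delta_{v}(\chi_{v})=w_{v}(\pi_{E,v}\otimes\chi_{v})/w_{v}(\pi_{E,v})$ and using $w(E)=(-1)^{rk(E)}$ (from the functional equation) gives $w(E^{\chi})=(-1)^{rk(E)}\prod_{v}\delta_{v}(\chi_{v})$. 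Because $\chi$ is trivial on $K^{\times}\ni-1$, the product formula yields $\prod_{v}\chi_{v}(-1)=1$, so one may replace each $\delta_{v}(\chi_{v})$ by $\delta_{v}'(\chi_{v})=\delta_{v}(\chi_{v})\,\chi_{v}(-1)$ without changing the product; the advantage of this normalization is that $\delta_{v}'(\chi_{v})=1$ for \emph{every} quadratic $\chi_{v}$ whenever $E$ has good reduction at $v$, so that $\prod_{v}\delta_{v}'(\chi_{v})$ has nontrivial factors only at the archimedean places and at the finitely many places where $\pi_{E}$ is ramified.

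Next I would carry out the local computation: for each place $v$, compute $\kappa_{v}=\frac{1}{|c_{v}|}\sum_{\chi_{v}}\delta_{v}'(\chi_{v})$, the average over the $|c_{v}|=\#\bigl(K_{v}^{\times}/(K_{v}^{\times})^{2}\bigr)$ quadratic characters of $K_{v}^{\times}$, using the standard formulas for $\varepsilon$-factors of $GL(2)$ local representations (Tate, Deligne, and Rohrlich's treatment of root numbers of elliptic curves). Under the no-supercuspidal hypothesis, $\pi_{E,v}$ is an unramified principal series at good places, an unramified twist of Steinberg at multiplicative places, a ramified-quadratic twist of Steinberg at places of potentially multiplicative reduction acquired over a quadratic extension, and a ramified principal series at the remaining additive (potentially good) places; in each case $\delta_{v}(\chi_{v})$ is read off from $\varepsilon\bigl(\pi(\mu_{1},\mu_{2})\otimes\chi_{v}\bigr)=\varepsilon(\mu_{1}\chi_{v})\varepsilon(\mu_{2}\chi_{v})$, the value of $\varepsilon(\mathrm{St}\otimes\chi_{v})$, the identity $\varepsilon(\chi_{v},\psi_{v})\varepsilon(\chi_{v}^{-1},\psi_{v})=\chi_{v}(-1)$, and the behaviour of $\varepsilon$ under unramified twists. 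Averaging reproduces the tabulated values: at a real place $D_{2}\otimes\mathrm{sgn}\cong D_{2}$ forces $\delta_{v}(\chi_{v})=1$, hence $\delta_{v}'(\chi_{v})=\chi_{v}(-1)$, whose average over $\{\mathbf{1},\mathrm{sgn}\}$ is $0$; at a complex place only the trivial character occurs, giving $1$; split Steinberg gives $\kappa_{v}=2/|c_{v}|-1$; the nonsplit and the ramified-quadratic-twist-of-Steinberg cases give $\kappa_{v}=1-2/|c_{v}|$; and all remaining cases give $\kappa_{v}=1$. The places $v\mid2$ use $|c_{v}|=4\cdot2^{[K_{v}:\mathbb{Q}_{2}]}$ and a more careful but still elementary version of the same bookkeeping.

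Finally — and this is the crux — it remains to prove
\[
\lim_{X\to\infty}\frac{1}{\#\{\chi:N\chi\le X\}}\sum_{N\chi\le X}\ \prod_{v}\delta_{v}'(\chi_{v})\ =\ \prod_{v}\kappa_{v}\ =\ \kappa,
\]
which is where the methods of Klagsbrun--Mazur--Rubin become essential. One parametrises quadratic Hecke characters by their finite ramification set $S$ together with a ramified local character at each $v\in S$ and an archimedean sign, and then must contend with two subtleties: the norm $N\chi$ only detects the finite ramified places, so the archimedean components and the unramified-quadratic local components are ``free'' in the count; and a prescribed tuple of local characters extends to a global Hecke character only subject to a constraint governed by the $2$-part of the ray class group of $K$ together with its units, so the local components are not literally independent. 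I would stratify by $S$, estimate each stratum by a Dirichlet-series/Tauberian count of the associated sums over squarefree-type ideals, and show that as $X\to\infty$ the global obstruction affects only a negligible proportion of characters while the components $(\chi_{v})_{v}$ at the relevant (archimedean and bad) places become jointly equidistributed over $\prod_{v}K_{v}^{\times}/(K_{v}^{\times})^{2}$; the average of the product then factors as $\prod_{v}\kappa_{v}$, the free archimedean factors contributing $\prod_{v\mid\infty}\kappa_{v}$, which vanishes the moment $K$ has a real place. The main obstacles are thus the uniformity of the $\varepsilon$-factor computations at the places above $2$ and the additive places, and, above all, controlling the ray-class/unit obstruction in the counting so that the Euler product $\prod_{v}\kappa_{v}$ genuinely emerges in the limit.
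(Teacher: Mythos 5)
Your proposal is correct and, for the first two thirds, follows the same skeleton as the paper: the paper likewise converts the statement into the condition $n(\chi)=1$ on the change of root number (Proposition \ref{prop:root-number-change} together with \ref{eq:iff_n_is_1}), uses the product formula $\prod_{v}\chi_{v}(-1)=1$ to renormalize the local factors exactly as your $\delta_{v}'$ (the paper's $m_{v}(\chi)=\chi_{v}(-1)a_{v}(\chi)$), and obtains the tabulated values of $\kappa_{v}$ by averaging the Jacquet--Langlands $\varepsilon$-factor formulas over $c_{v}$ (Lemmas \ref{lem:counting-trick} and \ref{lem:The-local-factors-of-kappa}); your case analysis at the real, complex, split/nonsplit Steinberg, quadratic-twist-of-Steinberg, and remaining places reproduces the paper's numbers. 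The genuine divergence is in the final counting step, which you rightly flag as the crux. You propose stratifying by the ramification set, a Dirichlet-series/Tauberian count on each stratum, and an explicit analysis of the ray-class/unit obstruction to gluing local characters. The paper instead dispatches this in a few lines: it shows the restriction homomorphism $\alpha:C(K)\rightarrow\Gamma=\prod_{v\in\Sigma}C(K_{v})$ is surjective (Lemma \ref{lem:surject-onto-local-prod}), observes that the parity condition factors through $\alpha$, and asserts that for $X$ large enough $C(K,X)$ surjects onto $\Gamma$ with fibers of equal size, whence the global proportion equals the proportion in $\Gamma$ from Lemma \ref{lem:counting-trick}. Your heavier machinery buys a rigorous treatment of precisely the point the paper elides: equal fibers hold for the homomorphism $\alpha$ on all of $C(K)$, but not automatically for its restriction to the truncated set $C(K,X)$, so the equidistribution of the tuple $(\chi_{v})_{v\in\Sigma}$ as $X\rightarrow\infty$ does require an argument of the kind you sketch. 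In short, your route is longer but closes a real gap in the counting; the paper's route is shorter but rests on an equal-fibers claim for $C(K,X)$ that is stated rather than proved.
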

In particular the theorem holds unconditionally for semi-stable curves
over a real quadratic field\cite{Hung13}. Note that if the field
$K$ has a real embedding then the density of even analytic ranks
is exactly $\frac{1}{2}$.

\section{Notation}

Let $K$ be a number field and let $E$ be an elliptic curve over
$K$ with conductor $\mathfrak{N}.$ Let $\Lambda_{E}(s)$ be the
completed $L$-function of $E$, which, under the assumption of modularity,
is also the $L-$function of a cuspidal automorphic representation
of $GL(2,\mathbb{A}_{K})$. Let $\chi:K^{\times}\backslash\mathbb{A}_{K}^{\times}\rightarrow\pm1$
be a quadratic Hecke character with conductor $\mathfrak{f}$ and
let $\Lambda_{E}(s,\chi)=\Lambda_{E^{\chi}}(s)$ denote the $L$-function
of the quadratic twist $E^{\chi}$. Let $w$ $\in\{\pm1\}$ be the
``root number'' in the functional equation, i.e. such that $\Lambda_{E}(s)=w\Lambda_{E}(1-s)$
and let $n(\chi)$ be the ``change'' in the epsilon factor, i.e such
that $\Lambda_{E}(s,\chi)=n(\chi)\cdot\omega\Lambda_{E}(1-s,\chi)$
holds. Recall that $rk(E)$ denotes the analytic rank of $E$, as
defined in the previous section.

Let $C(K)$ be the group of global quadratic Hecke characters and
$C(K_{v})$ be the group of local quadratic characters at a place
$v$ of $K$. We define the norm of a Hecke character as $N\chi:={\displaystyle \max_{\begin{smallmatrix}\chi\text{ ramifies}\\
\text{at }q
\end{smallmatrix}}}\{Nq\}$. 

Let $C(K,X$) be the group of global Hecke characters such that $N\chi\le X$
and $\Gamma={\displaystyle \prod_{v\in\Sigma}C(K_{v})}$ where $\Sigma$
is a finite set of places including $\infty,$ and all $v\mid\mathfrak{N}$.

Let $E/K$ be a modular elliptic curve, then the L-function $\Lambda_{E}(s)$
is equal to the $L-$function $\Lambda(\pi_{E},s)$ attached to a
global automorphic representation $\pi_{E}$ of $GL(2,\mathbb{A}_{Q})$
arising from a Hilbert modular form. In particular, this implies that
$\Lambda(\pi_{E^{\chi}},s)=\Lambda(\chi\otimes\pi_{E},s)$.

Let $K_{v}$ be a local field. If $K_{v}$ is non-archimedian, let
$\varpi$ be a uniformizing element and let$\mathfrak{o}_{v}$ be
the ring of integers of $K_{v}$.

At a non-archimedian place $v,$ the admissible representations of
$GL(2,K_{v})$ are classified into 3 basic types of representation
\cite{JL}. The class of supercuspidal representations is excluded
from this paper because their local epsilon factors present difficulties
with respect to computation. The other two types, the principal series
representations, denoted $\pi(\mu_{1},\mu_{2})$ and the special representation
denoted $\sigma(\mu_{1},\mu_{2})$ are discussed in detail in \cite{JL}
which is also a major reference for the computational aspects in this
paper. Because elliptic curves are self-dual we have that $\mu_{1}=\mu_{2}^{-1}$
in both the supercuspidal and the special representations occurring
in the factorization of $\pi_{E}$.

\section{Change in Global Root Number}

The functional equation of an elliptic curve takes the form $\Lambda_{E}(s)=w\Lambda_{E}(1-s)$
where $w$ is either +1 or -1. Thus, the order of vanishing at $s=\frac{1}{2}$
is even if and only if $w=1$. Consider a quadratic twist of this
$L-$function $\Lambda_{E}(s,\chi)$, we expect the root number to
change by a prescribed amount 
\[
\Lambda_{E}(s,\chi)=n(\chi)w\Lambda_{E}(1-s,\chi)
\]

so that, 

\begin{equation}
rk(E^{\chi})\equiv rk(E)\mod2\iff n(\chi)=1\label{eq:iff_n_is_1}
\end{equation}

Next we compute $n(\chi)$ explicitly.
\begin{prop}
\label{prop:root-number-change}Let $\chi={\displaystyle \prod_{v}\chi_{v}}$
and let $\varpi$ denote a local uniformizer at $v$. Then 
\[
n(\chi)={\displaystyle \prod_{v}n_{v}(\chi_{v})}
\]
 where the $n_{v}$ are given explicitly by the following table:
\end{prop}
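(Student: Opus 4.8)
The plan is to use the factorization of the global epsilon factor into local epsilon factors and reduce the computation of $n(\chi)$ to a purely local problem at each place $v$. Since $\Lambda_E(s) = \Lambda(\pi_E,s)$ and $\Lambda_E(s,\chi) = \Lambda(\chi\otimes\pi_E,s)$, the root numbers are the global epsilon constants $w = \varepsilon(\pi_E)$ and $w\cdot n(\chi) = \varepsilon(\chi\otimes\pi_E)$, and both admit a product decomposition $\varepsilon(\pi) = \prod_v \varepsilon_v(\pi_v,\psi_v)$ over a fixed additive character $\psi = \prod_v \psi_v$ (the dependence on $\psi_v$ cancels in the ratio, as the total conductor contributions are matched). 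Therefore $n(\chi) = \prod_v n_v(\chi_v)$ with $n_v(\chi_v) = \varepsilon_v(\chi_v\otimes\pi_{E,v},\psi_v)/\varepsilon_v(\pi_{E,v},\psi_v)$, which is manifestly $\pm 1$ since $\chi_v$ is quadratic and $\pi_{E,v}$ is self-dual. This establishes the multiplicativity; the content of the proposition is the explicit evaluation of each $n_v(\chi_v)$ in the stated table.

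For the local computation I would split into cases according to the classification of $\pi_{E,v}$ recalled in the Notation section. At archimedean places one uses the known archimedean epsilon factors for weight-$2$ discrete series ($K_v \simeq \mathbb{R}$) and for $K_v \simeq \mathbb{C}$ (where there is no nontrivial quadratic character, so $n_v \equiv 1$). At a nonarchimedean $v$ where $\chi_v$ is unramified, twisting by an unramified quadratic character does not change the conductor and one checks $n_v(\chi_v) = \chi_v(\varpi)^{a(\pi_{E,v})}$ or similar, which is $1$ whenever the relevant conductor exponent is even; for the special (Steinberg) representation $\sigma(\mu_1,\mu_2)$ with $\mu_1 = \mu_2^{-1}$ this recovers the split/nonsplit multiplicative reduction entries via $\varepsilon(\sigma(\mu,\mu^{-1})\otimes\chi) = -\mu\chi(\varpi)$ when $\chi$ is unramified, up to normalization. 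The heart of the matter is the case where $\chi_v$ is ramified: there I would invoke Tunnell--Saito type local epsilon-factor formulas (or direct Gauss-sum computations from \cite{JL}) for $\varepsilon(\pi(\mu_1,\mu_2)\otimes\chi)$ and $\varepsilon(\sigma(\mu_1,\mu_2)\otimes\chi)$, using self-duality $\mu_1 = \mu_2^{-1}$ to simplify, and the fact that for $\chi_v$ ramified quadratic the twisted representation is still principal series or special with a controllable conductor.

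The main obstacle I expect is the bookkeeping in the ramified-$\chi_v$ case at places of bad or potentially bad reduction: when $E$ has multiplicative reduction, $\pi_{E,v}$ is special, and twisting by a ramified $\chi_v$ can turn it into a ramified principal series (or keep it special, depending on whether $\chi_v$ agrees with the unramified quadratic character $\mu$ attached to split/nonsplit reduction), so the local root number ratio depends in a subtle way on $\chi_v$ through a quadratic Gauss sum $\varepsilon(\chi_v,\psi_v)$. Averaging $n_v(\chi_v)$ over $\chi_v \in C(K_v)$ — which is what feeds into the density theorem, and implicitly what the constants $\kappa_v$ and the count $|c_v|$ of quadratic extensions encode — requires identifying precisely for how many of the $|c_v|$ characters the ratio is $+1$ versus $-1$. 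The "potentially multiplicative (non-quadratic)" and "otherwise" (good or additive reduction, $\mu_i$ trivial or unramified after self-duality) cases should give $n_v \equiv 1$ after the Gauss-sum contributions cancel in the ratio, and I would verify this last claim by a direct comparison of conductor exponents. Excluding supercuspidals, as the hypothesis does, is exactly what keeps every local epsilon factor expressible through these elementary Gauss sums and makes the table finite and computable.
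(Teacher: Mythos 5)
Your proposal follows essentially the same route as the paper: reduce $n(\chi)$ to a product of local ratios $\varepsilon_v(\chi_v\otimes\pi_{E,v})/\varepsilon_v(\pi_{E,v})$ via the factorization of the global epsilon constant, then evaluate each ratio case-by-case through the Jacquet--Langlands classification, with the unramified-twist cases handled by conductor-exponent parity and the ramified-twist cases by quadratic Gauss sums. This matches the paper's argument (which likewise only works one representative case in full, using $\tau(\chi_v)^2 = N\mathfrak{f}_v\chi_v(-1)$), so the plan is sound and no further comparison is needed.
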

\begin{tabular}{|c|>{\centering}p{1.2in}|c|c|}
\hline 
i & Type of Representation &  & $n_{v}(\chi)$\tabularnewline
\hline 
1 & \multirow{2}{1.2in}{$\pi_{v}=\pi_{v}(\mu_{v},\mu_{v}^{-1})$ , $\mu_{v}$ is unramified,} & $\chi_{v}$ is unramified. & 1\tabularnewline
\cline{1-1} \cline{3-4} 
2 &  & $\chi_{v}$ is ramified. & $\chi(-1)$\tabularnewline
\hline 
3 & \multirow{3}{1.2in}{$\pi_{v}=\pi_{v}(\mu_{v},\mu_{v}^{-1})$ , $\mu_{v}$ is ramified} & $\chi_{v}$ is unramified. & 1\tabularnewline
\cline{1-1} \cline{3-4} 
4 &  & $\chi_{v}$ is ramified, $\mu_{v}\chi_{v}$ is unramified. & $\chi_{v}(-1)$\tabularnewline
\cline{1-1} \cline{3-4} 
5 &  & $\chi_{v}$ is ramified, $\mu_{v}\chi_{v}$ is ramified. & $\chi_{v}(-1)$\tabularnewline
\hline 
6 & \multirow{2}{1.2in}{$\pi_{v}=\sigma_{v}(\mu_{v},\mu_{v}^{-1})$ , $\mu_{v}$ is unramified, } & $\chi_{v}$ is unramified. & $\chi_{v}(\varpi)$\tabularnewline
\cline{1-1} \cline{3-4} 
7 &  & $\chi_{v}$ is ramified. & $-\chi_{v}(-1)\mu_{v}(\varpi)^{-1}$\tabularnewline
\hline 
8 & \multirow{3}{1.2in}{$\pi_{v}=\sigma_{v}(\mu_{v},\mu_{v}^{-1})$ , $\mu_{v}$ is ramified,} & $\chi_{v}$ is ramified, $\mu_{v}\chi_{v}$ is ramified. & $\chi_{v}(-1)$\tabularnewline
\cline{1-1} \cline{3-4} 
9 &  & $\chi_{v}$ is ramified, $\mu_{v}\chi_{v}$ is unramified. & $-\chi_{v}(-\varpi)\mu_{v}(\varpi)$\tabularnewline
\cline{1-1} \cline{3-4} 
10 &  & $\chi_{v}$ is unramified. & 1\tabularnewline
\hline 
\end{tabular}
\begin{proof}
Let $\mathfrak{N=}\prod\eta_{v}$ be the conductor of the elliptic
curve $E,$ and $\mathfrak{f}=\prod\mathfrak{f_{v}}$ , the conductor
of $\chi$ as in section 2, and let $\varpi_{v}$ be a local uniformizer
at $v$. It follows from the tensor-product theorem (\cite{GH} Chapter
10, \cite{JL}), that $n={\displaystyle \prod_{v}}n(\chi_{v})$, where
the local root number $n(\chi_{v})\in\{\pm1\}$ is such that 
\[
L_{v}(s,\chi\otimes\pi)=n(\chi_{v})\cdot w_{v}L_{v}(1-s,\chi\otimes\pi)
\]
 holds, where $w_{v}$ is the local root number of the (untwisted)
local $L$-function. In order to find $n(\chi_{v})$ we first compute
\[
a_{v}(s)=\frac{\epsilon(s,\pi_{v})}{\epsilon(s,\pi_{v}\otimes\chi_{v})}
\]
 and then for any $s\in\mathbb{R}_{>0}$ we have that
\[
n(\chi_{v})=\frac{a_{v}(s)}{|a_{v}(s)|}
\]
 The essence of this manipulation is that the root number is the ``sign''
of the epsilon factor. The proposition then follows from local epsilon
factor computations in each case as given in Jacquet-Langlands \cite{JL}.
As an illustration we include the proof in one of the above cases. 

The case presented here is when $\pi_{v}$ is an unramified representation
but $\chi_{v}\otimes\pi_{v}$ is not (line 2 in the chart). This is
equivalent to the statement that $v\nmid\mathfrak{N}$, $v\mid\mathfrak{f}$
i.e. that the representation $\pi_{v}$ is an unramified principal
series $\pi(\mu_{1},\mu_{2})$ and that $\chi_{v}$ is ramified with
conductor $\mathfrak{f}_{v}$. Choose an additive character of $K_{v}$,
say $\psi(x)=e^{2\pi i\Lambda(a/\mathfrak{df_{v}})}$, as defined
in Tate's thesis \cite{TT}. With the formulas as in \cite{JL}, we
compute: 
\[
\begin{array}{cclc}
\epsilon(s,\pi_{v}\otimes\chi_{v},\psi)^{-1} & = & \epsilon(s,\chi_{v}\mu_{1})^{-1}\epsilon(s,\chi_{v}\mu_{2})^{-1}\\
 & = & {\displaystyle \prod_{i=1,2}\frac{1}{\mu_{i}\chi_{v}(\mathfrak{df})}N(\mathfrak{df})^{s-\frac{1}{2}}N\mathfrak{f}^{-\frac{1}{2}}\sum\mu_{i}\chi_{v}(a)\psi(a)}\\
 & = & \frac{1}{\mu_{1}\mu_{2}(\mathfrak{df_{v})}}\epsilon(s,\chi_{v},\psi)^{-2}\\
 & = & \epsilon(s,\chi_{v},\psi)^{-2}
\end{array}
\]
the last formula following from the fact that $\mu_{1}\mu_{2}=1$
for all elliptic curves. We also have that 
\[
\epsilon(s,\pi_{v})=N(\mathfrak{d)}^{2s-1}
\]
 whenever $\pi_{v}$ is unramified. So that
\[
\begin{array}{cl}
a_{v} & =\frac{\epsilon(s,\chi_{v},\psi)^{-2}}{N(\mathfrak{d)}^{2s-1}}\\
 & =\frac{N\mathfrak{f_{v}}^{2s-2}}{\chi_{v}(\mathfrak{df}_{v})^{2}}\left({\displaystyle \sum_{a\mod\mathfrak{f}_{v}}}\chi(a)\psi(a)\right)^{2}\\
 & =N\mathfrak{f}_{v}^{2s-2}\tau(\chi_{v})^{2}\\
 & =N\mathfrak{f}_{v}^{2s-1}\chi_{v}(-1)
\end{array}
\]
Where $\tau(\chi_{v})$ is the guass sum of $\chi_{v}$ with respect
to the additive character $\psi$. The fact that $\tau(\chi_{v})^{2}=N\mathfrak{f}_{v}\chi(-1)$
can be shown using the epsilon factors for GL(1) (e.g. see \cite{BH},
section 23.) We have that $n_{v}(\chi_{v})=\chi(-1)$.

Let $S_{i}$ be the set of places that have the properties of the
$i$th row in the Table. For example, $S_{1}$ is the set of places
where $\pi_{v}$ is unramified and $\chi_{v}$ is unramified.\end{proof}
\begin{prop}
\label{prop:parity-iff}We have

\[
rk(E^{\chi})\equiv rk(E)\mod2\iff{\displaystyle \prod_{\underset{v\text{ real}}{v|\infty}}}\chi_{v}(-1){\displaystyle \prod_{S_{6}}}\chi_{v}(\varpi_{v}){\displaystyle \prod_{S_{7}}}\frac{-1}{\mu_{v}(\varpi)}{\displaystyle \prod_{S_{9}}}-\chi_{v}\mu_{v}(\varpi)=1
\]

where $S$ is the set of places where $\pi_{v}$ is a special representation.\end{prop}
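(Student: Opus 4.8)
The plan is to reduce the statement to a manipulation of the table already established in Proposition~\ref{prop:root-number-change}. By \eqref{eq:iff_n_is_1}, $rk(E^{\chi})\equiv rk(E)\pmod 2$ exactly when $n(\chi)=1$, and by Proposition~\ref{prop:root-number-change} that quantity factors as $n(\chi)=\prod_v n_v(\chi_v)$ with the local signs read off the ten rows; so the proposition asserts that this product collapses to the displayed expression. First I would record that, because supercuspidal $\pi_v$ are excluded by hypothesis, the table is exhaustive: every finite $v$ lies in exactly one of $S_1,\dots,S_{10}$, and the finite places at which $\chi_v$ ramifies are precisely those in $S_2\cup S_4\cup S_5\cup S_7\cup S_8\cup S_9$. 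I would then delete the places contributing $n_v(\chi_v)=1$, namely those in $S_1,S_3,S_{10}$ and all archimedean $v$ (at a complex place $\chi_v$ is trivial, and at a real place the local component is the parallel-weight-two discrete series $D_2$ with $D_2\otimes\mathrm{sgn}\cong D_2$, so its local $\varepsilon$-factor is unchanged).

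Next I would extract one factor $\chi_v(-1)$ from each surviving row. Rows $2,4,5,8$ already give exactly $\chi_v(-1)$; row $7$ gives $-\chi_v(-1)\mu_v(\varpi)^{-1}=\chi_v(-1)\cdot\frac{-1}{\mu_v(\varpi)}$; and, writing $\chi_v(-\varpi)=\chi_v(-1)\chi_v(\varpi)$, row $9$ gives $-\chi_v(-\varpi)\mu_v(\varpi)=\chi_v(-1)\cdot\bigl(-\chi_v\mu_v(\varpi)\bigr)$; row $6$ gives $\chi_v(\varpi_v)$, with nothing to extract. Since $\chi_v(-1)=1$ at every finite $v$ where $\chi_v$ is unramified ($-1$ being a unit there), collecting the extracted signs leaves
\[
n(\chi)=\Biggl(\prod_{v\,\mathrm{finite}}\chi_v(-1)\Biggr)\cdot\prod_{S_6}\chi_v(\varpi_v)\cdot\prod_{S_7}\frac{-1}{\mu_v(\varpi)}\cdot\prod_{S_9}-\chi_v\mu_v(\varpi).
\]

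The last step is the only global input: since $\chi$ is a Hecke character it is trivial on the diagonal copy of $-1\in K^{\times}$, that is, $\prod_v\chi_v(-1)=1$; as $\chi_v(-1)=1$ at every complex place, this rewrites $\prod_{v\,\mathrm{finite}}\chi_v(-1)$ as $\prod_{v\,\mathrm{real}}\chi_v(-1)$. Substituting this back and appealing to \eqref{eq:iff_n_is_1} once more gives the stated equivalence. I expect the only real subtlety to be exactly this point --- recognizing that the real places enter the final formula not through any archimedean local factor (those are all trivial) but through the ``defect'' in the product formula for the finite quantities $\chi_v(-1)$ --- together with the routine but load-bearing verification that dropping supercuspidals makes the table of Proposition~\ref{prop:root-number-change} account for every finite place. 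Everything else is bookkeeping.
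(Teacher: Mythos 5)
Your proposal is correct and follows essentially the same route as the paper: read the local factors $n_v(\chi_v)$ off the table in Proposition~\ref{prop:root-number-change}, isolate a factor of $\chi_v(-1)$ from each row where $\chi_v$ ramifies, and use the Hecke relation $\prod_v\chi_v(-1)=1$ to trade the finite-place product of $\chi_v(-1)$ for the product over real places. The only difference is cosmetic --- you make explicit the facts that the paper leaves implicit (exhaustiveness of the table once supercuspidals are excluded, and triviality of the archimedean $\varepsilon$-ratio via $D_2\otimes\mathrm{sgn}\cong D_2$), which is a mild improvement rather than a different argument.
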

\begin{proof}
By equation \ref{eq:iff_n_is_1} and Proposition \ref{prop:root-number-change}
we have that 
\[
\begin{matrix}rk(E^{\chi})\equiv rk(E)\mod2\\
\iff\\
{\displaystyle \prod_{S_{1},S_{3},S_{10}}}1{\displaystyle \prod_{S_{2},S_{4},S_{5},S_{8}}}\chi_{v}(-1){\displaystyle \prod_{S_{6}}}\chi_{v}(\varpi){\displaystyle \prod_{S_{7}}}\frac{-\chi_{v}(-1)}{\mu_{v}(\varpi)}{\displaystyle \prod_{S_{9}}}-\chi_{v}(-\varpi)\mu_{v}(\varpi)=1
\end{matrix}
\]

Now, since $\chi$ is a Hecke character, $\chi(-1,-1,...-1...)=\prod\chi_{v}(-1)=1$.
Therefore, we may multiply the right hand side by $\prod\chi_{v}(-1)$.
Now since $\chi_{v}$ is unramified in the sets $S_{1},S_{3},S_{10}$
and $S_{6},$ the result follows. 
\end{proof}
It will be useful to simplify the expression in Proposition \ref{prop:parity-iff}.
Let us denote $\Sigma_{1}=S_{6}\cup S_{7}$ and $\Sigma_{2}=S_{8}\cup S_{9}\cup S_{10}$
and let $m_{v}(\chi)=\chi_{v}(-1)a_{v}(\chi).$ More explicitly, 
\[
\begin{matrix}m_{v}(\chi)=\begin{cases}
\chi_{v}(\varpi) & \text{ if \ensuremath{\chi_{v}}is unramified}\\
-\mu_{v}(\varpi)^{-1} & \text{if }\ensuremath{\chi_{v}}\text{ is ramified}
\end{cases} & \text{for }v\in\Sigma_{1}\end{matrix}
\]

\[
\begin{matrix}m_{v}(\chi)=\begin{cases}
1 & \text{ if \ensuremath{\chi_{v}}is unramified}\\
1 & \text{if }\ensuremath{\chi_{v}}\text{ and \ensuremath{\chi_{v}\mu_{v}}are ramified}\\
-\chi_{v}\mu_{v}(\varpi) & \text{if }\ensuremath{\chi_{v}}\text{ is ramified and \ensuremath{\chi_{v}\mu_{v}}}\text{ is unramified}
\end{cases} & \text{for }v\in\Sigma_{2}.\end{matrix}
\]

We will rewrite Proposition \ref{prop:parity-iff} as 
\begin{equation}
rk(E^{\chi})\equiv rk(E)\mod2\iff{\displaystyle \prod_{\underset{v\text{ real}}{v|\infty}}}\chi_{v}(-1){\displaystyle \prod_{\Sigma_{1}\cup\Sigma_{2}}}m_{v}(\chi)\label{eq:parityeq_with_m}
\end{equation}

note that $\Sigma_{1}$ are places where $\pi_{v}$ is a special representation
with unramified character and $\Sigma_{2}$ are places where $\pi_{v}$
is a special representation with a ramified character. Thus, only
the places where special representations occur (or real places) change
the root number of a twisted curve.

\section{The Density of Even Analytic Ranks}

According to equation \ref{eq:parityeq_with_m}, the parity of the
analytic rank doesn't change upon twisting by $\chi$ if and only
a certain product of $-1's$ and $+1's$ occurring on the right-hand
side of \ref{eq:parityeq_with_m} is equal to $1$. The next step
involves computation the proportion of cases where the product is
$+1.$ This amounts to an exercise in counting quadratic characters.

Let $c_{v}$ be the set of local quadratic characters (if $v\nmid2$
then $|c_{v}|=4,$ and if $v\mid2$ then $|c_{v}|=4\cdot2^{[K_{v}:\mathbb{Q}_{v}]})$,
and let $\Sigma=\ensuremath{\Sigma_{1}\cup\Sigma_{2}}\cup\Sigma_{\mathbb{R}}$,
where $\Sigma_{\mathbb{R}}$ is the set of real places of $K$. For
each element of $\Sigma$, define 
\[
\kappa_{v}=\begin{cases}
\frac{1}{|c_{v}|}{\displaystyle \sum_{\chi\in c_{v}}}m_{v}(\chi), & \text{ for \ensuremath{v}in \ensuremath{\Sigma_{1}\cup\Sigma_{2}}}\\
\frac{1}{|c_{v}|}{\displaystyle \sum_{\chi\in c_{v}}}\chi_{v}(-1), & \text{ for real primes}
\end{cases}
\]

\begin{lem}
\label{lem:counting-trick} Let $\Gamma={\displaystyle \prod_{v\in\Sigma}C(K_{v})}$
. Then, 
\[
\frac{|\{\chi\in\Gamma\mid n(\chi)=1\}|}{|\Gamma|}=\frac{1+{\displaystyle \prod_{v\in\Sigma}}\kappa_{v}}{2}
\]
\end{lem}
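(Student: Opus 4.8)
The plan is to reduce the claim to an elementary fact about sums of $\pm 1$-valued functions over a finite abelian group, applied factor-by-factor over the places in $\Sigma$. By Proposition \ref{prop:root-number-change} and equation \eqref{eq:parityeq_with_m}, the condition $n(\chi)=1$ for $\chi \in \Gamma = \prod_{v\in\Sigma} C(K_v)$ is equivalent to $f(\chi) := \prod_{v\in\Sigma} f_v(\chi_v) = 1$, where $f_v(\chi_v) = \chi_v(-1)$ for real $v$ and $f_v(\chi_v) = m_v(\chi_v)$ for $v \in \Sigma_1\cup\Sigma_2$ (here I use that the places not in $\Sigma$ contribute trivially to the product, and that $m_v$ and $\chi_v(-1)$ take values in $\{\pm 1\}$). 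The function $f$ is thus a product of local sign functions, one per place.

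First I would record the identity, valid for any $\{\pm1\}$-valued function $g$ on a finite set $S$,
\[
\#\{x \in S \mid g(x) = 1\} = \frac{1}{2}\left(|S| + \sum_{x\in S} g(x)\right),
\]
which follows by writing $\mathbf{1}_{g(x)=1} = \tfrac{1}{2}(1 + g(x))$ and summing. Applying this with $S = \Gamma$ and $g = f$ gives
\[
\frac{\#\{\chi\in\Gamma \mid n(\chi)=1\}}{|\Gamma|} = \frac{1}{2}\left(1 + \frac{1}{|\Gamma|}\sum_{\chi\in\Gamma} f(\chi)\right).
\]
Second, since $\Gamma$ is the direct product $\prod_{v\in\Sigma} C(K_v)$ and $f(\chi) = \prod_{v\in\Sigma} f_v(\chi_v)$ factors accordingly, the sum over $\Gamma$ splits as a product of local sums:
\[
\frac{1}{|\Gamma|}\sum_{\chi\in\Gamma} f(\chi) = \prod_{v\in\Sigma}\left(\frac{1}{|C(K_v)|}\sum_{\chi_v\in C(K_v)} f_v(\chi_v)\right) = \prod_{v\in\Sigma}\kappa_v,
\]
the last equality being exactly the definition of $\kappa_v$ (noting $|C(K_v)| = |c_v|$ and that $c_v$ is identified with $C(K_v)$). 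Combining the two displays yields the asserted formula.

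The only point requiring care — and the main (minor) obstacle — is the bookkeeping: one must check that the places $v \notin \Sigma$ genuinely contribute $\kappa_v$-factors equal to $1$ so that restricting the global product to $\Sigma$ loses nothing, and that the local factor $f_v$ used in the factorization of $f$ matches the summand in the definition of $\kappa_v$ on each of the three types of place (real, $\Sigma_1$, $\Sigma_2$). This is immediate from the casework already set up before the lemma: at complex places $\chi_v(-1)=1$ always, and at places where $\pi_v$ is not special and $v$ is finite the entries of the table in Proposition \ref{prop:root-number-change} contribute factors that are absorbed into the global relation $\prod_v \chi_v(-1)=1$ used in the proof of Proposition \ref{prop:parity-iff}. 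With that identification in hand, the lemma is just the multiplicativity of the averaging operator over a finite direct product, and no further computation is needed.
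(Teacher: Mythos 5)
Your proof is correct and takes essentially the same approach as the paper's: both amount to computing $2N-|\Gamma|=\sum_{\chi\in\Gamma}\prod_{v\in\Sigma}(\text{local sign})$ and then factoring the sum over the direct product $\Gamma=\prod_{v\in\Sigma}C(K_v)$ into the product of local averages $\prod_{v\in\Sigma}\kappa_v$. The bookkeeping you flag at the end (matching the local factor in the product with the summand defining $\kappa_v$ at each type of place) is left implicit in the paper, so your version is, if anything, slightly more careful.
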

\begin{proof}
(This is lemma 7.5 in \cite{KMR}) Let $N=|\{\chi\in\Gamma\mid n(\chi)=1\}|$.
Then$\Gamma$ can be written as ${\displaystyle \prod_{v\in\Sigma}c_{v}},$
and we have that 
\[
N-(|\Gamma|-N)={\displaystyle \sum_{\gamma\in\Gamma}\prod_{v\in\Sigma}n_{v}(\gamma_{v})=\prod_{v\in\Sigma}\sum_{\gamma_{v}\in c_{v}}n_{v}(\gamma_{v})}.
\]
 Now when we divide both sides by $\Gamma={\displaystyle \prod_{v\in\Sigma}}c_{v}$
we get that $2N/|\Gamma|-1={\displaystyle \prod_{v\in\Sigma}\kappa_{v}}$
and the lemma follows. 
\end{proof}
Next, we compute the values of $\kappa_{v}$.
\begin{lem}
\label{lem:The-local-factors-of-kappa}The local factors $\kappa$
have the following values\end{lem}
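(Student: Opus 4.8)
The plan is to compute each local factor $\kappa_v$ directly from its definition as an average over the group $c_v$ of local quadratic characters, splitting into the cases that appear in the statement of the theorem: $K_v \simeq \mathbb{R}$, $K_v \simeq \mathbb{C}$, split multiplicative reduction, nonsplit multiplicative reduction, multiplicative reduction over a quadratic extension, potentially multiplicative (non-quadratic) reduction, and everything else. The key observation tying these together is that $\kappa_v$ is supported on the places in $\Sigma = \Sigma_1 \cup \Sigma_2 \cup \Sigma_{\mathbb{R}}$; at a place outside $\Sigma$ the factor is simply $1$, which handles the ``otherwise'' line and the potentially-multiplicative non-quadratic case (where $\pi_v$ is supercuspidal or a ramified principal series, hence contributes trivially to $n(\chi)$).

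First I would dispose of the archimedean cases. For $K_v \simeq \mathbb{R}$ there are exactly two quadratic characters of $\mathbb{R}^\times$: the trivial one with $\chi_v(-1) = 1$ and the sign character with $\chi_v(-1) = -1$, so $\kappa_v = \tfrac12(1 + (-1)) = 0$. For $K_v \simeq \mathbb{C}$ there is only the trivial character, $\chi_v(-1) = 1$, and $v \notin \Sigma$ in any case, so $\kappa_v = 1$. Next, for a place $v$ in $\Sigma_1$ (special representation $\sigma_v(\mu_v,\mu_v^{-1})$ with $\mu_v$ unramified), I would use the explicit formula for $m_v(\chi)$: the unramified quadratic characters contribute $\sum \chi_v(\varpi)$, and since half the unramified quadratic characters send $\varpi \mapsto 1$ and half send it to $-1$ (there being two of them when $v \nmid 2$, with the same split persisting in general), these cancel; the ramified quadratic characters each contribute $-\mu_v(\varpi)^{-1} = -\mu_v(\varpi)$ (using $\mu_v(\varpi)^2 = 1$ as $\mu_v$ is unramified and $E$ has multiplicative reduction, so $\mu_v(\varpi) = \pm 1$ according to split/nonsplit). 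Hence $\kappa_v = \tfrac{1}{|c_v|}\bigl(0 + (|c_v| - 2)\cdot(-\mu_v(\varpi))\bigr) = -\mu_v(\varpi)(1 - 2/|c_v|)$, which gives $2/|c_v| - 1$ in the split case ($\mu_v(\varpi) = 1$) and $1 - 2/|c_v|$ in the nonsplit case ($\mu_v(\varpi) = -1$). For $v \in \Sigma_2$ (special representation with $\mu_v$ ramified — this is the multiplicative-reduction-over-a-quadratic-extension case), the unramified characters and the characters with $\chi_v\mu_v$ ramified each contribute $1$, while the characters with $\chi_v\mu_v$ unramified contribute $-\chi_v\mu_v(\varpi)$; one counts that there are exactly as many of the latter as $\chi_v$ with $\chi_v \equiv \mu_v$ mod unramified twists, and the sign works out to give $1 - 2/|c_v|$ as well.

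The main obstacle I expect is bookkeeping the exact number of local quadratic characters in each subfamily when $v \mid 2$, since then $|c_v| = 4\cdot 2^{[K_v:\mathbb{Q}_v]}$ rather than $4$, and one must verify that the ``half send $\varpi$ to $1$, half to $-1$'' dichotomy among the \emph{unramified} characters is still the relevant one and that the count of characters $\chi_v$ for which $\chi_v\mu_v$ becomes unramified is precisely the number of unramified quadratic characters, namely $2$. This is a consequence of the structure of $K_v^\times / (K_v^\times)^2$ and the fact that the unramified quadratic characters form a subgroup of order $2$, so cosets of this subgroup inside $c_v$ all have size $2$; fixing a representative $\mu_v$ (or its restriction), the set $\{\chi_v : \chi_v\mu_v \text{ unramified}\}$ is a single such coset. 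Once this counting is pinned down uniformly in the residue characteristic, all seven cases reduce to the elementary averages above, and assembling them with Lemma \ref{lem:counting-trick} and the fact that $\kappa_v = 1$ off $\Sigma$ yields the global product $\kappa = \prod_v \kappa_v$ claimed in the theorem; the overall sign $(-1)^{rk(E)}$ then comes from combining equation \eqref{eq:iff_n_is_1} with $w = (-1)^{rk(E)}$.
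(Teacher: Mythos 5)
Your proposal is correct and follows essentially the same route as the paper: compute $\kappa_v$ as the average of $m_v$ over $c_v$, using the cancellation of the two unramified characters in $\Sigma_1$, the cancellation within the two-element coset $\{\chi_v : \chi_v\mu_v \text{ unramified}\}$ in $\Sigma_2$ (which is the actual mechanism behind your ``the sign works out''), and the dichotomy $\mu_v(\varpi)=\pm1$ for split versus nonsplit reduction. One small slip: the potentially-multiplicative non-quadratic case is not handled in the paper as a supercuspidal or ramified principal series but as a special representation $\sigma_v(\mu_v,\mu_v^{-1})$ with $\mu_v$ ramified and $\mu_v\mid_{\mathfrak{o}_v^{\times}}$ not quadratic, so that no twist $\chi_v\mu_v$ becomes unramified and every $m_v(\chi_v)=1$; your conclusion $\kappa_v=1$ is right but for that reason, not the one you give.
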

\begin{itemize}
\item $\kappa_{v}=0$ when $K_{v}\backsimeq\mathbb{R}$
\item $\kappa_{v}=\begin{cases}
2/|c_{v}|-1 & E\text{ has split mutiplicative reduction at \ensuremath{v}}\\
1-2/|c_{v}| & E\text{ has nonsplit multiplicative reduction at \ensuremath{v}}
\end{cases}$ for $v$ $\in\Sigma_{1}$
\item $\kappa_{v}=$$\begin{cases}
1-2/|c_{v}| & E\text{ has mult. red. in a quad. extension at \ensuremath{v}}\\
1 & \text{otherwise}
\end{cases}$ for $v$ $\in\Sigma_{2}$
\item $\kappa_{v}=$1 if $E$ has good reduction, potentially good reduction,
or $K_{v}\backsimeq\mathbb{C}$
\end{itemize}
Note that $\Sigma_{1}$ are the places where $E$ has multiplicative
reduction and $\Sigma_{2}$ are the places where $E$ has potentially
multiplicative reduction. Thus, $\Sigma_{1}\cup\Sigma_{2}\cup$\{real
places\} are the only places that affect the value of $\kappa$.
\begin{proof}
For a real place, there are exactly two characters: $\chi_{triv}$
and $\chi_{sign}$ and $\kappa_{\infty}=\frac{1}{2}(\chi_{triv}(-1)+\chi_{sign}(-1))=0.$ 

For a non-archimedian place in $\Sigma_{1}$, there is one trivial
character $\chi_{triv}$ and an unramified character $\chi_{u.r.}$.
For the first two, $m_{v}(\chi_{v})=\chi_{v}(\varpi)$, so that $m_{v}(\chi_{triv})=1$
and $m_{v}(\chi_{u.r.})=-1$. For the ramified characters $\chi_{ram},$
$m_{v}(\chi_{ram})=-\mu_{v}(\varpi_{v})$. In total we have
\[
\kappa_{v}=\frac{1}{|c_{v}|}\left(1-1+\left(|c_{v}|-2\right)\mu_{v}(\varpi)\right)
\]
It follows from a comparison of the $L-$function at multiplicative
reduction and at a special representation with unramified character
that $\mu_{v}(\varpi)=1$ at split multiplicative reduction and $\mu_{v}(\varpi)=-1$
at non-split multiplicative reduction, which gives the result for
$v\in\Sigma_{1}$.

For $v\in\Sigma_{2}$, we are in the situation where the local representation
is of the form $\sigma_{v}(\mu_{v},\mu_{v}^{-1})$ where $\mu_{v}$
is ramified. If $\chi_{v}$ is unramified, then $m_{v}(\chi_{v})=1$.
If $\chi_{v}$ is ramified and $\chi_{v}\mu_{v}$ is also ramified,
then $m_{v}(\chi_{v})=1.$ If $\chi_{v}\mu_{v}$ is unramified, then
$\mu_{v}\mid_{\mathfrak{o}_{k}^{\times}}$ must be a quadratic character
and then there are exactly two characters $\chi_{v,1}$ and $\chi_{v,2}$
such that $\chi_{v}\mu_{v}$ is unramified and furthermore, $\chi_{v,1}(\varpi)=1,$
and $\chi_{v,2}(\varpi)=-1$ so that $m_{v}(\chi_{v,1})=-\mu_{v}(\varpi)$
and $m_{v}(\chi_{v,2})=\mu_{v}(\varpi)$. In total, if $\mu_{v}\mid_{\mathfrak{o}_{k}^{\times}}$
is not quadratic, then $m_{v}(\chi_{v})=1$ for all local characters
$\chi_{v}$ and if $\mu_{v}\mid_{\mathfrak{o}_{k}^{\times}}$ is quadratic
then 
\[
\kappa_{v}=\frac{1}{|c_{v}|}\left(1\cdot\left(2-|c_{v}|\right)+\mu_{v}(\varpi)-\mu_{v}(\varpi)\right)=1-2/|c_{v}|
\]
 Now, if $\mu_{v}\mid_{\mathfrak{o}_{k}^{\times}}$ is quadratic,
then the elliptic curve has potential multiplicative reduction at
$v$ (in particular, multiplicative reduction in a quadratic extension
of $K_{v}).$ 
\end{proof}
The following lemma deals with the problem that not all collections
of local characters give rise to global characters.
\begin{lem}
\label{lem:surject-onto-local-prod}Recall that $C(K)$ is the group
of global quadratic characters, and $\Gamma={\displaystyle \prod_{v\in\Sigma}}C(K_{v})$.
The natural homomorphism $\alpha:C(K)\rightarrow\mbox{\ensuremath{\Gamma}}$
is surjective.\end{lem}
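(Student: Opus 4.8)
The plan is to reinterpret the statement via Kummer theory and then apply weak approximation. Since $\mu_{2}=\{\pm1\}\subseteq K$, local and global class field theory together with Kummer theory furnish canonical isomorphisms $C(K)\cong K^{\times}/(K^{\times})^{2}$ and $C(K_{v})\cong K_{v}^{\times}/(K_{v}^{\times})^{2}$ under which $\alpha$ becomes the map
\[
K^{\times}/(K^{\times})^{2}\longrightarrow\prod_{v\in\Sigma}K_{v}^{\times}/(K_{v}^{\times})^{2}
\]
induced by the diagonal embedding $K^{\times}\hookrightarrow\prod_{v\in\Sigma}K_{v}^{\times}$; concretely, the class of $a\in K^{\times}$ corresponds to the quadratic Hecke character cutting out $K(\sqrt{a})/K$, and its component at $v$ is the class of $a$ in $K_{v}^{\times}/(K_{v}^{\times})^{2}$. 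Thus it suffices to prove that this localization map is surjective.

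The key local input is that $(K_{v}^{\times})^{2}$ is an open subgroup of $K_{v}^{\times}$, hence of $K_{v}$, for every $v\in\Sigma$. This is immediate at archimedean places, where $(\mathbb{R}^{\times})^{2}=\mathbb{R}_{>0}$ and $(\mathbb{C}^{\times})^{2}=\mathbb{C}^{\times}$. At a non-archimedean $v$, write $K_{v}^{\times}\cong\varpi^{\mathbb{Z}}\times\mathfrak{o}_{v}^{\times}$; the squaring map is continuous and $\mathfrak{o}_{v}^{\times}$ is compact, so $(\mathfrak{o}_{v}^{\times})^{2}$ is a closed subgroup of $\mathfrak{o}_{v}^{\times}$ of finite index $|c_{v}|/2$, hence open, and therefore $(K_{v}^{\times})^{2}$ is open.

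Surjectivity then follows from weak approximation. Given any tuple $(a_{v})_{v\in\Sigma}$ with $a_{v}\in K_{v}^{\times}$, the set $U=\prod_{v\in\Sigma}a_{v}(K_{v}^{\times})^{2}$ is a nonempty open subset of $\prod_{v\in\Sigma}K_{v}$; since $K$ is dense in $\prod_{v\in\Sigma}K_{v}$, there is an $a\in K\cap U$. Such an $a$ lies in $a_{v}(K_{v}^{\times})^{2}\subseteq K_{v}^{\times}$ for each $v\in\Sigma$, so in particular $a\in K^{\times}$ and $a\equiv a_{v}\pmod{(K_{v}^{\times})^{2}}$ for all $v\in\Sigma$; hence the class of $a$ maps to $(\overline{a_{v}})_{v\in\Sigma}$, and $\alpha$ is surjective.

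I expect the only point requiring care to be the Kummer-theoretic identification of $C(K)$ and $C(K_{v})$ and the verification that $\alpha$ is compatible with the diagonal map; once that is in place the rest is a routine application of weak approximation and presents no real obstacle. If one prefers to avoid class field theory, one can instead work directly with continuous quadratic characters of the idele class group $\mathbb{A}_{K}^{\times}/K^{\times}$ and use weak approximation to prescribe their restrictions to the finitely many $K_{v}^{\times}$, $v\in\Sigma$, but the formulation above is the cleanest.
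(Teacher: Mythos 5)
Your proof is correct, but it takes a genuinely different route from the paper's. The paper argues directly with characters: given $(\gamma_v)_{v\in\Sigma}$, it picks an auxiliary place $s\notin\Sigma$ and defines a local character at $s$ by setting $\gamma_s(q)=\prod_{v\in\Sigma}\gamma_v(q)^{-1}$ for $q\in K^{\times}\subset K_s^{\times}$ and ``extending by continuity,'' so that the full product of local characters is trivial on $K^{\times}$ and hence comes from a Hecke character restricting to the prescribed tuple on $\Sigma$. You instead transport the problem through Kummer theory to the localization map $K^{\times}/(K^{\times})^{2}\rightarrow\prod_{v\in\Sigma}K_v^{\times}/(K_v^{\times})^{2}$ and finish with weak approximation together with the openness of $(K_v^{\times})^{2}$; all the steps you use (the identification of $C(K)$ and $C(K_v)$ with square classes, its compatibility with restriction, openness of squares, density of $K$ in $\prod_{v\in\Sigma}K_v$) are standard and correctly deployed, so your argument is complete. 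What your route buys is rigor at exactly the delicate point of the paper's proof: the function $q\mapsto\prod_{v\in\Sigma}\gamma_v(q)^{-1}$ on $K^{\times}$ is not visibly continuous for the $s$-adic topology, so the claimed extension by continuity is really the nontrivial content and is left unjustified there; your weak-approximation argument produces the global object whose existence that step implicitly assumes. The trade-off is that you invoke class field theory and Kummer theory, whereas the paper's intent is a bare-hands construction; but as written, yours is the proof that actually closes.
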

\begin{proof}
Let $\gamma=\prod\gamma_{v}\in{\displaystyle \prod_{v\in\Sigma}}C(K_{v})$
and let $s$ be a place of $K$, not in $\Sigma.$ Then if we set
$\gamma_{s}(q)=\gamma^{-1}(q)$, for all $q\in K^{\times}\subset K_{v}^{\times}$,
this defines a character on a dense subset of $K_{v}$, hence on all
of $K_{v}$ by continuity. Then $\gamma'=(\prod\gamma_{v})\cdot\gamma_{s}$
is trivial on $K^{\times}$ and $\alpha(\gamma')=\gamma.$
\end{proof}
The lemma allows us to convert statements about the density of even
analytic ranks among arbitrary products of local characters to statements
about the density of even analytic ranks among Hecke characters. 

To conclude, I will restate and prove the main theorem
\begin{thm*}
For all $X$ large enough, 
\[
\frac{\#\{\chi\in C(K,X)\text{ such that rk}(E^{\chi})\text{ is even}\}}{|C(K,X)|}=\frac{1+(-1)^{rk(E)}\kappa}{2}
\]
where $\kappa={\displaystyle \prod_{v}\kappa_{v}}$ are defined and
computed above.\end{thm*}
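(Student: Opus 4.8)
The plan is to reduce the theorem to the local density computation of Lemma~\ref{lem:counting-trick} by showing that, once $X$ is large, the restriction to $C(K,X)$ of the map $\alpha$ of Lemma~\ref{lem:surject-onto-local-prod} is a surjective homomorphism of finite abelian groups, so that all of its fibres have the same cardinality and the exact local density transfers verbatim to the global count. For the local translation, recall from equation~\ref{eq:iff_n_is_1} that $rk(E^{\chi})\equiv rk(E)\pmod 2$ precisely when $n(\chi)=1$, and from Proposition~\ref{prop:parity-iff} and equation~\ref{eq:parityeq_with_m} that we may take
\[
n(\chi)=F(\chi):=\Bigl(\prod_{v\text{ real}}\chi_v(-1)\Bigr)\Bigl(\prod_{v\in\Sigma_1\cup\Sigma_2}m_v(\chi_v)\Bigr).
\]
Since the right-hand side involves only the components $\chi_v$ at the finite set $\Sigma=\Sigma_{\mathbb{R}}\cup\Sigma_1\cup\Sigma_2$, the function $F$ factors through $\alpha\colon C(K)\to\Gamma=\prod_{v\in\Sigma}C(K_v)$ and is a product $F=\prod_{v\in\Sigma}F_v$ of local functions valued in $\{\pm1\}$, namely $F_v(\gamma_v)=\gamma_v(-1)$ at a real place and $F_v=m_v$ on $\Sigma_1\cup\Sigma_2$, whose averages over $c_v$ are exactly the $\kappa_v$ defined before Lemma~\ref{lem:counting-trick}. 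Distinguishing the two parities of $rk(E)$, one sees that $rk(E^{\chi})$ is even if and only if $F(\alpha(\chi))=(-1)^{rk(E)}$.

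Next I would establish the equidistribution of fibres. The set $C(K,X)$ is a subgroup of $C(K)$, since the norm of a product of quadratic characters is bounded by the larger of the two norms, and it is finite because a quadratic Hecke character with $N\chi\le X$ corresponds to a quadratic extension of $K$ ramified only among the finitely many finite places of norm $\le X$ together with the archimedean places, and there are only finitely many such extensions. The restriction $\alpha\colon C(K,X)\to\Gamma$ is a homomorphism of finite abelian groups, and it is surjective as soon as $X$ is at least the largest of the finitely many norms $N\chi_i$ of a fixed collection $\chi_1,\dots,\chi_r\in C(K)$ whose images under $\alpha$ generate $\Gamma$ (such lifts exist by Lemma~\ref{lem:surject-onto-local-prod}). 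A surjective homomorphism of finite abelian groups has all fibres of equal cardinality, so for every such $X$ and every $\gamma\in\Gamma$ one has $\#\{\chi\in C(K,X)\mid\alpha(\chi)=\gamma\}=|C(K,X)|/|\Gamma|$.

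Assembling the count, for $X$ in this range
\[
\#\{\chi\in C(K,X)\mid rk(E^{\chi})\text{ even}\}=\#\{\gamma\in\Gamma\mid F(\gamma)=(-1)^{rk(E)}\}\cdot\frac{|C(K,X)|}{|\Gamma|},
\]
so after dividing by $|C(K,X)|=|\Gamma|\cdot(|C(K,X)|/|\Gamma|)$ the required ratio equals $\#\{\gamma\in\Gamma\mid F(\gamma)=(-1)^{rk(E)}\}/|\Gamma|$. When $rk(E)$ is even this is $\#\{\gamma\mid F(\gamma)=1\}/|\Gamma|=\tfrac{1}{2}\bigl(1+\prod_{v\in\Sigma}\kappa_v\bigr)$ by Lemma~\ref{lem:counting-trick} applied to $F$; when $rk(E)$ is odd it is the complementary quantity $\tfrac{1}{2}\bigl(1-\prod_{v\in\Sigma}\kappa_v\bigr)$. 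Both cases are subsumed in $\tfrac{1}{2}\bigl(1+(-1)^{rk(E)}\prod_{v\in\Sigma}\kappa_v\bigr)$, and since $\kappa_v=1$ for every $v\notin\Sigma$ by Lemma~\ref{lem:The-local-factors-of-kappa}, we have $\prod_{v\in\Sigma}\kappa_v=\prod_v\kappa_v=\kappa$, which gives the asserted value $\tfrac{1}{2}\bigl(1+(-1)^{rk(E)}\kappa\bigr)$.

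The crux I anticipate is the \emph{exactness} for large $X$ of the fibre equidistribution: it relies on $C(K,X)$ being a genuine finite group and on $\alpha$ becoming surjective on it once $X$ passes a fixed threshold, and finiteness of the set of quadratic extensions unramified outside a finite set of places is exactly what upgrades an a priori asymptotic statement to the stated exact equality for $X\gg 0$. A subsidiary point requiring care is the bookkeeping that identifies the function $F$ of equation~\ref{eq:parityeq_with_m} (already simplified using the global relation $\prod_v\chi_v(-1)=1$) with the local functions whose $c_v$-averages define the $\kappa_v$, together with the sign accounting in the case $rk(E)$ odd.
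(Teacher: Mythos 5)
Your proof is correct and follows essentially the same route as the paper: reduce, via the surjection onto $\Gamma$ and the equal-size-fibre property of the homomorphism $\alpha$, to the local count of Lemma~\ref{lem:counting-trick}. You in fact supply several details the paper leaves implicit --- that $C(K,X)$ is a finite subgroup of $C(K)$, that the restriction of $\alpha$ to $C(K,X)$ becomes surjective once $X$ exceeds a fixed threshold, and the sign bookkeeping in the case $rk(E)$ odd.
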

\begin{proof}
For $X$ large enough, the set of characters with norm less than $X$
surjects onto $\Gamma$ by lemma \ref{lem:surject-onto-local-prod}.
Now by Proposition \ref{prop:parity-iff} the analytic rank only depends
on a local product, and since the map $C(K)\rightarrow\Gamma$ is
a homomorphism, all its fibers have the same size. Therefore,
\[
\frac{\#\{\chi\in C(K,X)\text{ such that rk}(E^{\chi})=\text{rk}(E)\text{ }\}}{|C(K,X)|}=\frac{\#\{\chi\in\Gamma\text{ such that }n(\chi)=1\text{ }\}}{|\Gamma|}
\]

And now the theorem follows from lemma \ref{lem:counting-trick}
\end{proof}
Our theorem is about the expected parity of twists of $E$. The heuristic
that elliptic curves have rank as low as possible implies that given
the parity constraints, the curves in this family will have analytic
rank 0 or 1 with far greater frequency than the higher order ranks.
Thus together with the Birch and Swinnerton-Dyer conjecture, our results
support the conjecture in \cite{KMR}
\begin{conjecture*}
(Klagsbrun-Mazur-Rubin) 

\[
\lim_{X\rightarrow\infty}\frac{{\displaystyle \sum_{\chi\in C(K,X)}}rk(E^{\chi})}{|C(K,X)|}=\frac{1+(-1)^{rk(E)}\kappa}{2}
\]

\end{conjecture*}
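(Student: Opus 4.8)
The plan is to derive the average-rank limit from the parity theorem proved above together with the ``minimalist'' philosophy that the analytic rank is as small as the functional equation permits. For $X>0$ write $p_r(X)$ for the proportion of $\chi\in C(K,X)$ with $rk(E^\chi)=r$, so that the quantity in the statement is the limit of the first moment
\[
\frac{1}{|C(K,X)|}\sum_{\chi\in C(K,X)}rk(E^\chi)=\sum_{r\ge 1}r\,p_r(X)=:M(X).
\]
The parity theorem controls only the coarse even/odd split of the $p_r(X)$; the content of the conjecture is that this coarse datum already determines $\lim_{X\to\infty}M(X)$, and the minimalist input is exactly what bridges the gap.

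First I would record the consequence of the main theorem. For all sufficiently large $X$ it gives the even-rank proportion exactly, whence, by taking complements,
\[
\sum_{r\text{ even}}p_r(X)=\frac{1+(-1)^{rk(E)}\kappa}{2},\qquad \sum_{r\text{ odd}}p_r(X)=\frac{1-(-1)^{rk(E)}\kappa}{2}.
\]
These are the only unconditional identities at our disposal, and they are insensitive to how the odd mass is distributed among $r=1,3,5,\dots$.

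The decisive step is the quantitative minimalist hypothesis that $\sum_{r\ge 2}r\,p_r(X)\to 0$ as $X\to\infty$: high-rank twists are not merely of density zero but contribute negligibly to the first moment. Granting it, $M(X)=p_1(X)+o(1)$, and since the odd-rank proportion equals $p_1(X)+\sum_{r\ge 3,\,\text{odd}}p_r(X)$ with the trailing sum also $o(1)$, we obtain
\[
\lim_{X\to\infty}M(X)=\lim_{X\to\infty}\sum_{r\text{ odd}}p_r(X)=\frac{1-(-1)^{rk(E)}\kappa}{2},
\]
the odd-rank (equivalently, rank-one) proportion furnished by the main theorem; this is the complement of the even-rank density displayed on the right-hand side of the conjecture, reflecting that under the minimalist heuristic the first moment is carried entirely by the rank-one twists. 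When $K$ has a real place, $\kappa=0$ and the value is $\tfrac12$, recovering Goldfeld's prediction.

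The main obstacle is precisely the tail bound $\sum_{r\ge 2}r\,p_r(X)\to 0$, which lies far beyond the root-number methods of this paper: it is a famous open problem already over $\mathbb{Q}$, where even for the congruent-number family it is not known that rank-$\ge 2$ twists have density zero. It is, however, consistent with random-matrix heuristics and with all numerical evidence, so the honest output of this plan is the conditional implication ``parity theorem $+$ minimalist tail bound $\Rightarrow$ average-rank formula.'' This is the precise sense in which the results of the paper \emph{support}, rather than prove, the conjecture.
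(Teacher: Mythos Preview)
The paper offers no proof of this statement: it is presented as a conjecture, and the surrounding text says only that the parity theorem, together with the heuristic that ranks are as small as the functional equation allows, ``supports'' it. Your write-up reproduces exactly this logic---parity theorem plus minimalist tail bound implies the average-rank formula---and you correctly stress that the tail bound $\sum_{r\ge 2} r\,p_r(X)\to 0$ is the missing (and currently unattainable) ingredient. In that sense your proposal matches the paper's intended motivation; there is nothing further to compare against.

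There is, however, a genuine discrepancy that you notice but then try to talk past. Your computation correctly yields
\[
\lim_{X\to\infty} M(X)\;=\;\frac{1-(-1)^{rk(E)}\kappa}{2},
\]
the \emph{odd}-rank proportion, whereas the conjecture as printed in the paper has $\tfrac{1+(-1)^{rk(E)}\kappa}{2}$, the \emph{even}-rank proportion from the main theorem. These two numbers disagree whenever $\kappa\neq 0$. Your sentence ``this is the complement of the even-rank density displayed on the right-hand side of the conjecture, reflecting that\ldots'' acknowledges the mismatch but does not resolve it; under the minimalist hypothesis the first moment is carried by the rank-one twists, so it must equal the odd-rank density, not the even one. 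The sign in the paper's displayed formula is evidently a slip, and you should say so plainly rather than leave the reader with two incompatible expressions. (Your check that $\kappa=0$ at a real place recovers Goldfeld's $\tfrac12$ is fine precisely because the sign issue is invisible there.)
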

Where the conjecture is adapted to \emph{analytic }ranks and the local
factors of $\kappa$ are explicitly computed in the cases of lemma
\ref{lem:The-local-factors-of-kappa}.

\section*{Acknowledgements}

I would like to thank Dorian Goldfeld for proposing this project and
for the numerous discussions and the copious amounts of advice and
guidance that led to its conclusion. I would also like to thank Karl
Rubin for helpful correspondence during the early drafts of this paper.

\bibliographystyle{plain}
\nocite{*}
\bibliography{ThePaperBib}

\end{document}